\documentclass[12pt]{article}
\oddsidemargin 0 mm
\topmargin -10 mm
\headheight 0 mm
\headsep 0 mm 
\textheight 246.2 mm
\textwidth 159.2 mm
\footskip 9 mm
\setlength{\parindent}{0pt}
\setlength{\parskip}{5pt plus 2pt minus 1pt}
\pagestyle{plain}
\usepackage{amssymb}
\usepackage{amsthm}
\usepackage{amsmath}
\usepackage{graphicx}
\usepackage{enumerate}

\newtheorem{theorem}{Theorem}[section]
\newtheorem{definition}[theorem]{Definition}
\newtheorem{lemma}[theorem]{Lemma}

\newtheorem{example}[theorem]{Example}
\newtheorem{corollary}[theorem]{Corollary}
\title{Semimodular $\lambda$-lattices}
\author{Ivan~Chajda and Helmut~L\"anger}
\date{}
\begin{document}
\footnotetext[1]{Support of the research by \"OAD, project CZ~02/2019, and support of the research of the first author by IGA, project P\v rF~2019~015, is gratefully acknowledged.}
\maketitle
\begin{abstract}
The concept of a $\lambda$-lattice was introduced by V.~Sn\'a\v sel (\cite{Sn}) in order to generalize some lattice concepts for directed posets whose elements need not have suprema or infima. We extend the concept of semimodularity from lattices to $\lambda$-lattices and show connections to the lower covering condition and its generalizations. We further show that, contrary to the case of lattices, for $\lambda$-lattices semimodularity and the (weak) lower covering condition are independent properties. However, under some additional conditions semimodularity implies the (weak) lower covering condition. Examples of corresponding $\lambda$-lattices are presented.
\end{abstract}
 
{\bf AMS Subject Classification:} 06A11, 06B75, 06C10

{\bf Keywords:} $\lambda$-lattice, semimodularity, lower covering condition, maximal chain

\section{Introduction}

Posets are among the most frequently used relational structures in mathematics. In particular, lattices play an important role, i.e.\ posets where every two elements have a supremum and an infimum. Unfortunately, not every poset can be converted into a lattice. In Fig.~1 below there is depicted such a poset which is, moreover, bounded. Here all elements of $\mathbb R^+$ are under both elements $a$ and $b$, and these elements are under all elements of $\mathbb R^-$. (Here $\mathbb R^+$ and $\mathbb R^-$ denote the chains isomorphic to the poset of positive and negative reals, respectively.)

\vspace*{-8mm}

\[
\setlength{\unitlength}{7mm}
\begin{picture}(6,11)
\put(3,2){\circle*{.3}}
\put(3,3){\circle*{.3}}
\put(1,6){\circle*{.3}}
\put(5,6){\circle*{.3}}
\put(3,9){\circle*{.3}}
\put(3,10){\circle*{.3}}
\put(3,2){\line(0,1)2}
\put(3,8){\line(0,1)2}
\put(2.875,1.25){$0$}
\put(3.4,2.85){$c$}
\put(3,5){$\vdots$}
\put(.35,5.85){$a$}
\put(5.4,5.85){$b$}
\put(3,7){$\vdots$}
\put(3.4,8.85){$d$}
\put(2.85,10.4){$1$}
\put(2.2,.3){{\rm Fig.~1}}
\end{picture}
\]

\vspace*{-3mm}

It is evident that if $x,y\in\mathbb R^+\cup\mathbb R^-$ then $\sup(x,y)=\max(x,y)$ and $\inf(x,y)=\min(x,y)$ in this ordering. On the other hand, there do not exist $\sup(a,b)$ and $\inf(a,b)$. Thus this poset is not a lattice. However, if we use a choice function $\lambda$ which picks up some $c\in\mathbb R^+$ and $d\in\mathbb R^-$ arbitrarily (but fixed from now on) and we put
\begin{align*}
    a\vee b=b\vee a & :=d, \\
a\wedge b=b\wedge a & :=c, \\
            x\vee y & :=\max(x,y)\text{ if }(x,y)\neq(a,b),(b,a), \\
          x\wedge y & :=\min(x,y)\text{ if }(x,y)\neq(a,b),(b,a)
\end{align*}
then we obtain an algebra similar to a lattice, a so-called $\lambda$-lattice. This notion was introduced in \cite{Sn}. For an overview see \cite{CL}.

In a similar way, every directed poset (in particular, every bounded poset) can be organized into a $\lambda$-lattice. Surprisingly, $\lambda$-lattices can be described by relatively simple identities. These axioms are similar to those of lattices, only associativity of the operations is substituted by so-called weak associativity. However, the operations of $\lambda$-lattices need not be monotone. In fact, they are monotone if and only if the corresponding $\lambda$-lattice is a lattice. Namely, as pointed out above, for every two elements $a,b$ of a $\lambda$-lattice $\mathbf L=(L,\vee,\wedge)$ we have $a,b\leq a\vee b$. If $\vee$ is monotone and $a,b\leq c$ then $a\vee b\leq c\vee c=c$. Thus $a\vee b=\sup(a,b)$. Analogously, if $\wedge$ is monotone then $a\wedge b=\inf(a,b)$ and hence $\mathbf L$ is a lattice if both operations are monotone.

It turns out that $\lambda$-lattices are useful in some investigations concerning bounded posets since $\lambda$-lattices form a variety and thus the whole machinery of Universal Algebra can be applied.

\section{Semimodularity and lower covering conditions}

As mentioned above, the concept of a $\lambda$-lattice was introduced by V.~Sn\'a\v sel in \cite{Sn} as a non-associative generalization of a lattice. For the reader's convenience, we repeat this definition.

\begin{definition}
A {\em $\lambda$-lattice} is an algebra $(L,\vee,\wedge)$ of type $(2,2)$ satisfying the following identities:
\begin{enumerate}[{\rm(i)}]
\item $x\vee y\approx y\vee x$, $x\wedge y\approx y\wedge x$ {\rm(}commutativity{\rm)},
\item $x\vee((x\vee y)\vee z)\approx(x\vee y)\vee z$, $x\wedge((x\wedge y)\wedge z)\approx(x\wedge y)\wedge z$ {\rm(}weak associativity{\rm)},
\item $x\vee(x\wedge y)\approx x$, $x\wedge(x\vee y)\approx x$ {\rm(}absorption{\rm)}
\end{enumerate}
\end{definition}

Idempotency of $\vee$ and $\wedge$ follow easily by (iii):
\begin{align*}
  x\vee x & \approx x\vee(x\wedge(x\vee x))\approx x, \\
x\wedge x & \approx x\wedge(x\vee(x\wedge x))\approx x.
\end{align*}
If we define $x\leq y$ if $x\vee y=y$ then $x\leq y$ if and only if $x\wedge y=x$, and $(L,\leq)$ is a poset where $x\vee y\in U(x,y)$ and $x\wedge y\in L(x,y)$ for all $x,y\in L$. Here
\begin{align*}
U(x,y) & :=\{z\in L\mid x,y\leq z\}, \\
L(x,y) & :=\{z\in L\mid z\leq x,y\}.
\end{align*}
Also conversely, if $(L,\leq)$ is a {\em directed poset}, i.e.\ a poset satisfying $U(x,y)\neq\emptyset\neq L(x,y)$ for all $x,y\in L$ and one puts
\begin{align*}
& x\vee y\left\{
\begin{array}{ll}
:=\max(x,y) & \text{if }x\not\parallel y, \\
 \in U(x,y) & \text{otherwise},
\end{array}
\right. \\
& x\wedge y\left\{
\begin{array}{ll}
:=\min(x,y) & \text{if }x\not\parallel y, \\
 \in L(x,y) & \text{otherwise }
\end{array}
\right.
\end{align*}
such that $\vee$ and $\wedge$ are commutative then the resulting algebra $(L,\vee,\wedge)$ is a $\lambda$-lattice, see \cite{CL} and \cite{Sn} for details.

If a $\lambda$-lattice satisfies one of the distributivity or modularity laws then it is a lattice, see \cite{Sn}. Hence we are interested in weaker conditions which may be satisfied in $\lambda$-lattices which are not lattices. For this, we adopt the following concepts from \cite{Sz} (cf.\ also \cite{St}).

\begin{definition}
Let $\mathbf L=(L,\vee,\wedge)$ be a $\lambda$-lattice. We call $\mathbf L$ {\em semimodular} if for all $x,y,z\in L$ with $x\parallel y$ and $x\wedge y<z<x$ there exists some $u\in L$ with $x\wedge y<u\leq y$ and $(z\vee u)\wedge x=z$. We say that $\mathbf L$ satisfies the {\em weak lower covering condition} and the {\em lower covering condition} if
\begin{eqnarray}
x\wedge y\prec x\prec x\vee y & \text{implies} & y\prec x\vee y,\label{equ1} \\
             x\wedge y\prec x & \text{implies} & y\prec x\vee y\label{equ2}
\end{eqnarray}
{\rm(}$x,y\in L${\rm)}, respectively.
\end{definition}

It is clear that (\ref{equ1}) holds whenever $x\not\parallel y$ and that $x\wedge y\prec x\prec x\vee y$ implies $x\parallel y$. Therefore (\ref{equ1}) and (\ref{equ2}) are equivalent to
\begin{eqnarray*}
x\parallel y\text{ and }x\wedge y\prec x\prec x\vee y & \text{imply} & y\prec x\vee y, \\
             x\parallel y\text{ and }x\wedge y\prec x & \text{imply} & y\prec x\vee y
\end{eqnarray*}
{\rm(}$x,y\in L${\rm)}, respectively.

It is well-known (see e.g.\ \cite{Sz}) that every semimodular lattice satisfies the lower covering condition. Moreover, if a lattice is finite then it is semimodular if and only if it satisfies the lower covering condition. We are going to show that these relations do not hold for $\lambda$-lattices.

Now let us demonstrate the mentioned concepts by the following examples.

\begin{example}\label{ex1}
The $\lambda$-lattice $(L,\vee,\wedge)$ with $L:=\{0,a,b,c,d,e,1\}$, with the Hasse diagram visualized in Fig.~2

\vspace*{-8mm}

\[
\setlength{\unitlength}{7mm}
\begin{picture}(6,9)
\put(3,2){\circle*{.3}}
\put(1,4){\circle*{.3}}
\put(3,4){\circle*{.3}}
\put(5,4){\circle*{.3}}
\put(1,6){\circle*{.3}}
\put(3,6){\circle*{.3}}
\put(3,8){\circle*{.3}}
\put(3,2){\line(-1,1)2}
\put(3,2){\line(0,1)6}
\put(3,2){\line(1,1)2}
\put(1,6){\line(0,-1)2}
\put(1,6){\line(1,-1)2}
\put(1,6){\line(1,1)2}
\put(3,6){\line(-1,-1)2}
\put(3,6){\line(1,-1)2}
\put(2.875,1.25){$0$}
\put(.35,3.85){$a$}
\put(3.4,3.85){$b$}
\put(5.4,3.85){$c$}
\put(.35,5.85){$d$}
\put(3.4,5.85){$e$}
\put(2.85,8.4){$1$}
\put(2.2,.3){{\rm Fig.~2}}
\end{picture}
\]

\vspace*{-3mm}

and with $a\vee b=d$, $a\vee c=e$, $b\vee c=e$ and $d\wedge e=b$ satisfies the lower covering condition, but it is not semimodular since $d\parallel c$, $d\wedge c=0<a<d$, $c$ is the only element $x$ of $L$ satisfying $d\wedge c<x\leq c$, but $(a\vee c)\wedge d=e\wedge d=b\neq a$.
\end{example}

\begin{lemma}\label{lem1}
Let $(L,\vee,\wedge)$ be a semimodular $\lambda$-lattice and $a,b,c,d\in L$ and assume $a\parallel b$, $a\wedge b<c<a$, $a\wedge b<d<a$ and $c\neq d$. Then there exist $e,f\in L$ with $a\wedge b<e\leq b$, $a\wedge b<f\leq b$ and $c\vee e\neq d\vee f$.
\end{lemma}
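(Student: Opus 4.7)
The plan is to apply the semimodularity hypothesis twice and then derive a contradiction from equality of the two suprema. More precisely, I would first invoke semimodularity to the triple $(a,b,c)$: since $a\parallel b$ and $a\wedge b<c<a$, there exists some $e\in L$ with $a\wedge b<e\leq b$ and $(c\vee e)\wedge a=c$. Similarly, applying semimodularity to $(a,b,d)$, there exists some $f\in L$ with $a\wedge b<f\leq b$ and $(d\vee f)\wedge a=d$. These $e$ and $f$ will be the witnesses claimed by the lemma.

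It remains to verify that $c\vee e\neq d\vee f$. Suppose, for contradiction, that $c\vee e=d\vee f$. Then $(c\vee e)\wedge a$ and $(d\vee f)\wedge a$ are two names for the very same element of $L$, so by substitution
\[
c=(c\vee e)\wedge a=(d\vee f)\wedge a=d,
\]
which contradicts the assumption $c\neq d$. Hence $c\vee e\neq d\vee f$, completing the argument.

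The only step that requires any care is the initial application of semimodularity, since the definition demands strict inequalities $a\wedge b<c<a$ and $a\wedge b<d<a$; both are given by hypothesis, so semimodularity applies directly. No use of monotonicity or associativity is needed: the contradiction is obtained purely by substituting equals for equals into the term $(\cdot)\wedge a$, which is legitimate in any algebra. I therefore do not anticipate a genuine obstacle; the lemma is essentially an immediate consequence of the definition of semimodularity applied separately to $c$ and $d$, combined with the ``left-cancellation at $a$'' provided by the semimodularity condition itself.
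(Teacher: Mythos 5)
Your proof is correct and is essentially identical to the paper's: both apply semimodularity separately to $c$ and $d$ to obtain the witnesses $e$ and $f$, and then derive $c=d$ by substituting the assumed equality $c\vee e=d\vee f$ into the term $(\cdot)\wedge a$. No differences worth noting.
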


\begin{proof}
According to semimodularity there exist $e,f\in L$ with $a\wedge b<e\leq b$, $(c\vee e)\wedge a=c$, $a\wedge b<f\leq b$ and $(d\vee f)\wedge a=d$. Now $c\vee e=d\vee f$ would imply
\[
c=(c\vee e)\wedge a=(d\vee f)\wedge a=d
\]
contradicting $c\neq d$.
\end{proof}

\begin{example}\label{ex6}
Consider the $\lambda$-lattice $\mathbf L$ from Example~\ref{ex1}. Then $d\parallel c$, $d\wedge c<a<d$, $d\wedge c<b<d$ and $a\neq b$. Since $c$ is the only element $x$ of $L$ satisfying $d\wedge c<x\leq c$ and since $a\vee c=e=b\vee c$, $\mathbf L$ is not semimodular according to Lemma~\ref{lem1}.
\end{example}

\begin{example}\label{ex4}
The $\lambda$-lattice from Example~\ref{ex1} with $a\vee b=1$ and all other results of meets and joins as in Example~\ref{ex1} is not semimodular {\rm(}this follows as in Example~\ref{ex1}{\rm)} and satisfies the weak lower covering condition, but not the lower covering condition since
\[
b\wedge a=0\prec b,\text{ but }a\not\prec1=b\vee a.
\]
This shows that the weak lower covering condition is strictly weaker than the lower covering condition.
\end{example}

\begin{example}\label{ex3}
The $\lambda$-lattice $(L,\vee,\wedge)$ with $L:=\{0,a,b,c,d,1\}$, with the Hasse diagram visualized in Fig.~3

\vspace*{-10mm}

\[
\setlength{\unitlength}{7mm}
\begin{picture}(6,9)
\put(3,2){\circle*{.3}}
\put(1,4){\circle*{.3}}
\put(5,4){\circle*{.3}}
\put(1,6){\circle*{.3}}
\put(5,6){\circle*{.3}}
\put(3,8){\circle*{.3}}
\put(3,2){\line(-1,1)2}
\put(3,2){\line(1,1)2}
\put(1,4){\line(0,1)2}
\put(1,4){\line(2,1)4}
\put(5,4){\line(-2,1)4}
\put(5,4){\line(0,1)2}
\put(3,8){\line(-1,-1)2}
\put(3,8){\line(1,-1)2}
\put(2.875,1.25){$0$}
\put(.35,3.85){$a$}
\put(5.4,3.85){$b$}
\put(.35,5.85){$c$}
\put(5.4,5.85){$d$}
\put(2.85,8.4){$1$}
\put(2.2,.3){{\rm Fig.~3}}
\end{picture}
\]

\vspace*{-3mm}

and with $a\vee b=c$ and $c\wedge d=a$ is semimodular and satisfies the lower covering condition.
\end{example}

\begin{example}\label{ex2}
The $\lambda$-lattice $(L,\vee,\wedge)$ with $L:=\{0,a,b,c,d,e,f,g,h,1\}$, with the Hasse diagram depicted in Fig.~4

\vspace*{-8mm}

\[
\setlength{\unitlength}{7mm}
\begin{picture}(6,9)
\put(3,2){\circle*{.3}}
\put(2,3){\circle*{.3}}
\put(4,3){\circle*{.3}}
\put(1,4){\circle*{.3}}
\put(3,4){\circle*{.3}}
\put(5,4){\circle*{.3}}
\put(1,6){\circle*{.3}}
\put(3,6){\circle*{.3}}
\put(5,6){\circle*{.3}}
\put(3,8){\circle*{.3}}
\put(3,2){\line(-1,1)2}
\put(3,2){\line(1,1)2}
\put(2,3){\line(1,1)3}
\put(4,3){\line(-1,1)3}
\put(1,4){\line(0,1)2}
\put(1,4){\line(1,1)2}
\put(3,4){\line(0,1)4}
\put(5,4){\line(-1,1)2}
\put(5,4){\line(0,1)2}
\put(3,8){\line(-1,-1)2}
\put(3,8){\line(1,-1)2}
\put(2.875,1.25){$0$}
\put(1.35,2.85){$a$}
\put(4.4,2.85){$b$}
\put(.35,3.85){$c$}
\put(3.4,3.85){$d$}
\put(5.4,3.85){$e$}
\put(.35,5.85){$f$}
\put(3.4,5.85){$g$}
\put(5.4,5.85){$h$}
\put(2.85,8.4){$1$}
\put(2.2,.3){{\rm Fig.~4}}
\end{picture}
\]

\vspace*{-3mm}

and with
\begin{align*}
  x\vee y & =\sup(x,y)\text{ if }\sup(x,y)\text{ is defined}, \\
x\wedge y & =\inf(x,y)\text{ if }\inf(x,y)\text{ is defined}
\end{align*}
{\rm(}$x,y\in L${\rm)} and $a\vee e=h$, $b\vee c=f$, $c\vee d=f$, $d\vee e=h$, $f\wedge g=c$ and $g\wedge h=e$ is semimodular and satisfies the lower covering condition.
\end{example}

There exist semimodular $\lambda$-lattices not satisfying the weak lower covering condition as the following examples show:

\begin{example}\label{ex7}
The $\lambda$-lattice $(L,\vee,\wedge)$ with $L:=\{0,a,b,c,d,1\}$, with the Hasse diagram visualized in Fig.~5

\vspace*{-8mm}

\[
\setlength{\unitlength}{7mm}
\begin{picture}(6,11)
\put(3,2){\circle*{.3}}
\put(3,4){\circle*{.3}}
\put(1,6){\circle*{.3}}
\put(5,7){\circle*{.3}}
\put(1,8){\circle*{.3}}
\put(3,10){\circle*{.3}}
\put(3,2){\line(0,1)2}
\put(3,4){\line(-1,1)2}
\put(3,4){\line(2,3)2}
\put(1,6){\line(0,2)2}
\put(3,10){\line(-1,-1)2}
\put(3,10){\line(2,-3)2}
\put(2.875,1.25){$0$}
\put(3.4,3.85){$a$}
\put(.35,5.85){$b$}
\put(5.4,6.85){$c$}
\put(.35,7.85){$d$}
\put(2.85,10.4){$1$}
\put(2.2,.3){{\rm Fig.~5}}
\end{picture}
\]

\vspace*{-3mm}

and with $b\wedge c=a$ and $c\wedge d=0$ is semimodular, but does not satisfy the weak lower covering condition since
\[
b\wedge c=a\prec c\prec1=b\vee c,\text{ but }b\not\prec1=b\vee c.
\]
This example together with Example~\ref{ex1} shows that semimodularity and the weak lower covering condition as well as semimodularity and the lower covering condition are independent.
\end{example}

\begin{example}\label{ex9}
Any $\lambda$-lattice $(L,\vee,\wedge)$ with $L:=\{0,a,b,c,d,1\}$, with the Hasse diagram depicted in Fig.~6

\vspace*{-8mm}

\[
\setlength{\unitlength}{7mm}
\begin{picture}(6,9)
\put(3,2){\circle*{.3}}
\put(2,3){\circle*{.3}}
\put(1,4){\circle*{.3}}
\put(5,4){\circle*{.3}}
\put(1,6){\circle*{.3}}
\put(5,6){\circle*{.3}}
\put(3,8){\circle*{.3}}
\put(3,2){\line(-1,1)2}
\put(3,2){\line(1,1)2}
\put(1,4){\line(0,1)2}
\put(1,4){\line(2,1)4}
\put(5,4){\line(-2,1)4}
\put(5,4){\line(0,1)2}
\put(3,8){\line(-1,-1)2}
\put(3,8){\line(1,-1)2}
\put(2.875,1.25){$0$}
\put(1.35,2.85){$a$}
\put(.35,3.85){$b$}
\put(5.4,3.85){$c$}
\put(.35,5.85){$d$}
\put(5.4,5.85){$e$}
\put(2.85,8.4){$1$}
\put(2.2,.3){{\rm Fig.~6  }}
\end{picture}
\]

\vspace*{-3mm}

and $a\vee c=d$ is not semimodular since $b\wedge c=0<a<b$, $c$ is the only element $x$ of $L$ satisfying $b\wedge c=0<x\leq c$, but $(a\vee c)\wedge b=d\wedge b=b\neq a$, and does not satisfy the weak lower covering condition since
\[
c\wedge a=0\prec c\prec d=c\vee a,\text{ but }a\not\prec d=c\vee a.
\]
\end{example}

However, if we add an appropriate condition then semimodularity implies the weak lower covering condition.

\begin{theorem}\label{th1}
Let $\mathbf L=(L,\vee,\wedge)$ be a semimodular $\lambda$-lattice satisfying the following condition:
\begin{equation}
\text{If }x,y,z\in L,\;x\parallel y,\;x\parallel z\text{ and }y<z\text{ then }x\wedge y\leq x\wedge z.\label{equ3}
\end{equation}
Then $\mathbf L$ satisfies the weak lower covering condition.
\end{theorem}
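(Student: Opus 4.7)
The plan is to show the contrapositive: assuming $x\parallel y$, $x\wedge y\prec x$ and $x\prec x\vee y$, I would assume for contradiction that $y$ is not covered by $x\vee y$, i.e.\ that there exists some $w\in L$ with $y<w<x\vee y$, and reach a contradiction. First I would locate $w$ relative to $x$. If $w\leq x$ then $y<w\leq x$ gives $y<x$, contradicting $x\parallel y$; if $w\geq x$ then $x\leq w<x\vee y$ together with $x\prec x\vee y$ forces $w=x$, so $x>y$, again contradicting $x\parallel y$. Therefore $w\parallel x$.

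Next I would plug the triple $(x,y,w)$ into condition (\ref{equ3}): since $x\parallel y$, $x\parallel w$ and $y<w$, it yields $x\wedge y\leq x\wedge w$. Because $w\parallel x$, the element $x\wedge w$ is strictly below $x$, so $x\wedge y\leq x\wedge w<x$; the covering $x\wedge y\prec x$ then forces $x\wedge w=x\wedge y$. This equality is the whole point of introducing (\ref{equ3}) — it pins down the meet of the ``new'' element $w$ with $x$ to be the original meet, which is the ingredient needed for the next step.

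Now I would apply semimodularity with the triple $(w,x,y)$ (in place of the $(x,y,z)$ of the definition). The required hypotheses hold: $w\parallel x$ and $w\wedge x=x\wedge y<y<w$. Semimodularity thus provides some $u\in L$ with $w\wedge x<u\leq x$ and $(y\vee u)\wedge w=y$. Since $w\wedge x=x\wedge y\prec x$ and $w\wedge x<u\leq x$, the only possibility is $u=x$, so $(x\vee y)\wedge w=y$. But $y<w<x\vee y$ means $w\leq x\vee y$, whence $(x\vee y)\wedge w=w$. Therefore $w=y$, contradicting $y<w$.

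I expect the main obstacle to be choosing the right instance of semimodularity: the natural temptation is to use semimodularity on $(x,y,\cdot)$ directly, but because $x\wedge y\prec x$ there is no admissible $z$ strictly between $x\wedge y$ and $x$, so this direction is vacuous. The trick is to use the hypothetical $w$ to \emph{create} a strict interval — namely $w\wedge x<y<w$ — and then use (\ref{equ3}) to guarantee that this interval has the same lower endpoint $x\wedge y$, which is what makes the covering condition bite and collapse $u$ to $x$.
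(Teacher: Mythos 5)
Your proposal is correct and follows essentially the same route as the paper's proof: produce $w$ with $y<w<x\vee y$, rule out comparability with $x$, use condition (\ref{equ3}) plus the covering $x\wedge y\prec x$ to get $x\wedge w=x\wedge y$, and then apply semimodularity to the pair $(w,x)$ to force $u=x$ and the contradiction $w=(x\vee y)\wedge w=y$. The paper's argument is identical with $c,d$ in place of your $w,u$.
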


\begin{proof}
Let $a,b\in L$ and assume $a\parallel b$, $a\wedge b\prec a\prec a\vee b$ and $b\not\prec a\vee b$. Then there exists some $c\in L$ with $b<c<a\vee b$. Now $c\leq a$ would imply $b<c\leq a$ contradicting $a\parallel b$. On the other hand, $a<c$ would imply $a<c<a\vee b$ contradicting $a\prec a\vee b$. Hence $a\parallel c$. Because of $a\parallel b$, $a\parallel c$ and $b<c$ we have $a\wedge b\leq a\wedge c<a$ according to (\ref{equ3}) which together with $a\wedge b\prec a$ implies $a\wedge b=a\wedge c$. Now we have $c\parallel a$ and $c\wedge a=a\wedge b<b<c$. Because of semimodularity there exists some $d\in L$ with $c\wedge a<d\leq a$ and $(b\vee d)\wedge c=b$. From $c\wedge a=a\wedge b\prec a$ we conclude $d=a$. But then $c=(b\vee a)\wedge c=(b\vee d)\wedge c=b$ contradicting $b<c$.
\end{proof}

Let us note that condition (\ref{equ3}) does not imply monotonicity of $\wedge$. Namely the $\lambda$-lattice from Example~\ref{ex1} satisfies (\ref{equ3}), but $\wedge$ is not monotone since
\[
a\leq d,\text{ but }a\wedge e=a\not\leq b=b\wedge e.
\]

The situation described in Theorem~\ref{th1} was considered for so-called $\chi$-lattices in \cite R where instead of the weak lower covering condition the lower covering condition was considered. However, in contrast to $\lambda$-lattices in $\chi$-lattices joins and meets are minimal upper and maximal lower bounds, respectively.

It is elementary that a sublattice of a semimodular lattice need not be semimodular. This also holds for $\lambda$-lattices. However we can prove the following.

\begin{lemma}
Every convex sub-$\lambda$-lattice of a semimodular $\lambda$-lattice is semimodular.
\end{lemma}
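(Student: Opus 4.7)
The plan is straightforward: take an arbitrary witness of the hypothesis of semimodularity inside the convex sub-$\lambda$-lattice $\mathbf M=(M,\vee,\wedge)$, lift it to the ambient semimodular $\lambda$-lattice $\mathbf L$, apply semimodularity of $\mathbf L$, and then use convexity to show that the witness produced in $L$ actually lies in $M$.

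Concretely, I would start by noting that the order $\leq$ is definable from the operations via $a\leq b\iff a\vee b=b$, so the order on $\mathbf M$ coincides with the restriction of the order on $\mathbf L$; in particular, incomparability, strict inequalities, and the values of $x\wedge y$ and $x\vee y$ are the same whether computed in $\mathbf M$ or in $\mathbf L$ (since $M$ is closed under $\vee,\wedge$). So assume $x,y,z\in M$ satisfy $x\parallel y$ and $x\wedge y<z<x$; these relations then also hold in $\mathbf L$, and semimodularity of $\mathbf L$ yields some $u\in L$ with $x\wedge y<u\leq y$ and $(z\vee u)\wedge x=z$.

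The key step is to observe that this $u$ must actually belong to $M$. We have $x\wedge y\in M$ and $y\in M$ with $x\wedge y\leq u\leq y$, so convexity of $M$ in $L$ forces $u\in M$. Since $M$ is closed under the operations, $z\vee u\in M$ and $(z\vee u)\wedge x\in M$, and the equation $(z\vee u)\wedge x=z$ holds equally well in $\mathbf M$. Thus $u$ witnesses semimodularity of $\mathbf M$ at the triple $(x,y,z)$.

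I do not anticipate any real obstacle; the only point that needs care is making explicit the observation that the operations, and hence the derived order and the relevant strict inequalities and incomparabilities, are inherited by a sub-$\lambda$-lattice, so that convexity can be applied directly to the interval $[x\wedge y,y]$ to capture the element $u$.
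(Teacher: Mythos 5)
Your proof is correct and follows essentially the same route as the paper's: apply semimodularity of the ambient $\lambda$-lattice to obtain the witness $u$, then use convexity of the sub-$\lambda$-lattice applied to the interval $[x\wedge y,y]$ to conclude $u\in M$. Your additional remarks on the inheritance of the induced order and of the relation $\parallel$ are accurate but are left implicit in the paper.
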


\begin{proof}
If $\mathbf L=(L,\vee,\wedge)$ is a semimodular $\lambda$-lattice, $B$ a convex sub-$\lambda$-lattice of $\mathbf L$, $a,b,c\in B$, $a\parallel b$ and $a\wedge b<c<a$ then there exists some $d\in L$ with $a\wedge b<d\leq b$ and $(c\vee d)\wedge a=c$ and we have $d\in B$ because of the convexity of $B$.
\end{proof}

The question when a semimodular $\lambda$-lattice satisfies even the lower covering condition is answered in the following theorem.

\begin{theorem}
Let $\mathbf L=(L,\vee,\wedge)$ be a semimodular $\lambda$-lattice satisfying the following conditions:
\begin{eqnarray}
& & \text{If }x,y,z\in L,\;x\parallel y,\;x\parallel z\text{ and }y\prec z\text{ then }x\wedge y\leq x\wedge z,\label{equ4} \\
& & \text{if }x\parallel y,\;x<z\text{ and }y\prec z\text{ then }z\not<x\vee y,\label{equ5} \\
& & (L,\leq)\text{ satisfies the descending chain condition}.\label{equ6}
\end{eqnarray}
Then $\mathbf L$ satisfies the lower covering condition.
\end{theorem}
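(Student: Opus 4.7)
The plan is to adapt the proof of Theorem~\ref{th1}, arguing by contradiction and exploiting the two new hypotheses~(\ref{equ5}) and~(\ref{equ6}) to compensate for the fact that~(\ref{equ4}) only guarantees the quasi-monotonicity $x\wedge y\leq x\wedge z$ when $y\prec z$, and not for arbitrary $y<z$ as condition~(\ref{equ3}) does.

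First I would fix $a,b\in L$ with $a\wedge b\prec a$ and assume for contradiction that $b\not\prec a\vee b$. As in the proof of Theorem~\ref{th1} the case $a\not\parallel b$ is immediate, so I assume $a\parallel b$; in particular $b<a\vee b$. Hence the set $\{c\in L\mid b<c<a\vee b\}$ is nonempty, and the descending chain condition~(\ref{equ6}) yields a minimal such element $c$, which necessarily covers $b$; otherwise any element strictly between $b$ and $c$ would also lie strictly below $a\vee b$, contradicting minimality.

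Next I would establish $a\parallel c$. The relations $c\leq a$ and $a=c$ are ruled out exactly as in Theorem~\ref{th1} because they contradict $a\parallel b$. The new case $a<c$ is where condition~(\ref{equ5}) comes in: applied with $x:=a$, $y:=b$, $z:=c$, the hypotheses $a\parallel b$, $a<c$, $b\prec c$ force $c\not<a\vee b$, contradicting the choice of $c$. With $a\parallel b$, $a\parallel c$ and $b\prec c$ in hand, condition~(\ref{equ4}) now gives $a\wedge b\leq a\wedge c$, and since $a\wedge c<a$ (for otherwise $a\leq c$) the covering $a\wedge b\prec a$ forces the equality $a\wedge b=a\wedge c$.

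The remainder of the argument then follows the endgame of Theorem~\ref{th1} almost verbatim. From $c\parallel a$ and $c\wedge a=a\wedge b<b<c$, semimodularity produces $d\in L$ with $c\wedge a<d\leq a$ and $(b\vee d)\wedge c=b$; the covering $c\wedge a=a\wedge b\prec a$ forces $d=a$, whence $(a\vee b)\wedge c=b$, which together with $c\leq a\vee b$ (giving $(a\vee b)\wedge c=c$) contradicts $b<c$. The main obstacle I expect is precisely the reduction to the covering situation $b\prec c$, which is why the descending chain condition has to be assumed; once this reduction is in place, (\ref{equ5}) plays the complementary role of excluding the configuration $a<c<a\vee b$ that in Theorem~\ref{th1} was automatically excluded by the extra hypothesis $a\prec a\vee b$.
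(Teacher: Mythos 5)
Your proposal is correct and follows essentially the same route as the paper's proof: use the descending chain condition to produce $c$ with $b\prec c<a\vee b$, rule out $c\leq a$ directly and $a<c$ via (\ref{equ5}), apply (\ref{equ4}) to get $a\wedge c=a\wedge b$, and then derive the contradiction $c=b$ from semimodularity. The only difference is that you spell out the extraction of a minimal element covering $b$, which the paper leaves implicit.
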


\begin{proof}
Let $a,b\in L$ and assume $a\parallel b$, $a\wedge b\prec a$ and $b\not\prec a\vee b$. Then, because of (\ref{equ6}), there exists some $c\in L$ with $b\prec c<a\vee b$. Now $c\leq a$ would imply $b<c\leq a$ contradicting $a\parallel b$. On the other hand, $a<c$ would imply $c\not<a\vee b$ according to (\ref{equ5}), a contradiction. This shows $a\parallel c$. Because of $a\parallel b$, $a\parallel c$ and $b\prec c$ we have $a\wedge b\leq a\wedge c<a$ according to (\ref{equ4}) which together with $a\wedge b\prec a$ implies $a\wedge b=a\wedge c$. Now we have $c\parallel a$ and $c\wedge a=a\wedge b<b<c$. Because of semimodularity there exists some $d\in L$ with $c\wedge a<d\leq a$ and $(b\vee d)\wedge c=b$. From $c\wedge a=a\wedge b\prec a$ we conclude $d=a$. But then $c=(b\vee a)\wedge c=(b\vee d)\wedge c=b$ contradicting $b<c$.
\end{proof}

\section{Heights of elements of $\lambda$-lattices}

For lattices (see \cite{St} and \cite{Sz}) as well as for $\chi$-lattices (\cite R) of finite length certain equalities and inequalities concerning the heights of elements of the form $a$, $b$, $a\vee b$ and $a\wedge b$ were derived. Analogous results are not possible for $\lambda$-lattices because the heights of $a\vee b$ and $a\wedge b$ do not depend on the heights of $a$ and $b$. However, we can prove the following result.

\begin{theorem}
Let $(L,\vee,\wedge,0,1)$ be a bounded $\lambda$-lattice of finite length satisfying the lower covering condition and $a,b\in L$ and assume
\[
a\not\parallel b\text{ or }a\wedge b\prec a\text{ or }a\wedge b\prec b.
\]
Then
\[
h(a\vee b)-h(a\wedge b)\leq|h(a)-h(b)|+2.
\]
\end{theorem}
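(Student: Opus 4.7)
The plan is to split into cases by the hypothesis and exploit the lower covering condition to extract a second cover that controls the relevant heights. First, if $a \not\parallel b$, then without loss of generality $a \le b$; in this case $a\vee b = b$ and $a\wedge b = a$, so the left-hand side of the claim equals $h(b) - h(a) = |h(a)-h(b)|$, comfortably below the stated bound.

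Assume now $a \parallel b$. The hypothesis then forces $a\wedge b \prec a$ or $a\wedge b \prec b$; by the $a \leftrightarrow b$ symmetry of the claim I may take the former. Applying the lower covering condition (\ref{equ2}) yields $b \prec a\vee b$, so I am in possession of two specific covers: $a\wedge b \prec a$ and $b \prec a\vee b$. Using the fact that in the present setting each cover $y \prec x$ satisfies $h(x) = h(y) + 1$, these two covers give $h(a\wedge b) = h(a) - 1$ and $h(a\vee b) = h(b) + 1$. Substituting,
\[
h(a\vee b) - h(a\wedge b) = \bigl(h(b)+1\bigr) - \bigl(h(a)-1\bigr) = h(b) - h(a) + 2 \le |h(a) - h(b)| + 2,
\]
which is the desired inequality.

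The subtle step, and the main obstacle, is the identity $h(x) = h(y) + 1$ for a cover $y \prec x$. The inequality $h(x) \ge h(y) + 1$ is immediate by prolonging a maximal chain up to $y$ and then applying the cover, but the reverse inequality is a Jordan--Dedekind-type statement that needs justification. This is precisely the point at which the \emph{full} lower covering condition, as opposed to merely its weak form, is used in the hypothesis: it prevents two covers of a common element from sitting at different heights, and thereby supplies the grading needed to convert the two cover relations above into the exact height identities that drive the computation.
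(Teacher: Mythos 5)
Your argument follows the paper's proof step for step: dispose of the comparable case directly, and otherwise use the lower covering condition (\ref{equ2}) to turn the given cover $a\wedge b\prec a$ into the second cover $b\prec a\vee b$, then compute; the paper simply runs four explicit cases where you use two WLOG reductions.

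The weak point is your last paragraph. You upgrade the two covers to exact height identities $h(a\wedge b)=h(a)-1$ and $h(a\vee b)=h(b)+1$ and justify the nontrivial direction by claiming that the full lower covering condition ``prevents two covers of a common element from sitting at different heights, and thereby supplies the grading.'' That claim is nowhere proved, and it is not the role the full condition plays in this argument: the full (as opposed to weak) form is needed only so that $a\wedge b\prec a$ by itself, without the extra hypothesis $a\prec a\vee b$, already yields $b\prec a\vee b$. In this paper the statement that maximal chains between fixed endpoints have equal length is a separate theorem in Section~4 and rests on a different hypothesis (the LU-covering property), not on the lower covering condition, so you cannot borrow a grading from it. Fortunately your computation never needs the equalities: the one-sided bounds $h(a)\le h(a\wedge b)+1$ and $h(a\vee b)\le h(b)+1$ already give
\[
h(a\vee b)-h(a\wedge b)\le\bigl(h(b)+1\bigr)-\bigl(h(a)-1\bigr)=h(b)-h(a)+2\le|h(a)-h(b)|+2,
\]
and these inequalities are exactly what the paper invokes. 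Either verify that your definition of $h$ delivers $h(x)\le h(y)+1$ whenever $y\prec x$, or isolate that inequality as the single covering-to-height fact you need; in any case drop the detour through an unproven Jordan--Dedekind property.
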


\begin{proof}
If $a\leq b$ then
\[
h(a\vee b)-h(a\wedge b)=h(b)-h(a)\leq|h(a)-h(b)|+2.
\]
If $b\leq a$ then
\[
h(a\vee b)-h(a\wedge b)=h(a)-h(b)\leq|h(a)-h(b)|+2.
\]
If $a\wedge b\prec a$ then $b\prec a\vee b$ and hence
\begin{align*}
      h(a) & \leq h(a\wedge b)+1, \\
h(a\vee b) & \leq h(b)+1
\end{align*}
which implies
\[
h(a\vee b)-h(a\wedge b)\leq h(b)+1-h(a)+1\leq|h(a)-h(b)|+2.
\]
If, finally, $a\wedge b\prec b$ then $a\prec a\vee b$ and hence
\begin{align*}
      h(b) & \leq h(a\wedge b)+1, \\
h(a\vee b) & \leq h(a)+1
\end{align*}
which implies
\[
h(a\vee b)-h(a\wedge b)\leq h(a)+1-h(b)+1\leq|h(a)-h(b)|+2.
\]
\end{proof}

\section{Maximal chains in $\lambda$-lattices of finite length}

\begin{definition}\label{def1}
A poset $(P,\leq)$ is said to satisfy the {\em LU-covering property} if for every $x,y,z\in P$ with $x\prec y$, $x\prec z$ and $y\parallel z$ there exists some $u\in P$ with $y\prec u$ and $z\prec u$.
\end{definition}

Under the condition mentioned in Definition~\ref{def1} we can prove a result analogous to that for lattices, see e.g.\ \cite{St} and \cite{Sz}.

\begin{theorem}
Let $\mathbf P=(P,\leq,1)$ be a poset with $1$ of finite length satisfying the {\rm LU}-covering property. Then all maximal chains from a fixed element of $P$ to $1$ are of the same length.
\end{theorem}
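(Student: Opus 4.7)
The plan is to imitate the classical Jordan–Hölder style argument for upper semimodular posets, using the LU-covering property as the semimodular substitute. I would proceed by strong induction on $d(a)$, defined as the length of a longest chain from $a$ to $1$ in $\mathbf{P}$, which is a well-defined natural number because $\mathbf{P}$ has finite length. The base case $d(a)=0$ is trivial, since then $a=1$ and the only chain is the one-element chain.

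For the inductive step, take two maximal chains $a=x_0\prec x_1\prec\cdots\prec x_m=1$ and $a=y_0\prec y_1\prec\cdots\prec y_n=1$ and aim to show $m=n$. If $x_1=y_1$, the two tails are maximal chains from $x_1$ to $1$, and since $d(x_1)\leq d(a)-1<d(a)$ the inductive hypothesis gives $m-1=n-1$. Suppose instead $x_1\neq y_1$. Both elements cover $a$, so $x_1<y_1$ would insert $x_1$ strictly between $a$ and $y_1$, contradicting $a\prec y_1$; by symmetry $x_1\parallel y_1$. The LU-covering property applied to $a\prec x_1$, $a\prec y_1$ now yields some $u\in P$ with $x_1\prec u$ and $y_1\prec u$. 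Using the finite length of $\mathbf{P}$, extend $u$ upward to a maximal chain $u=z_0\prec z_1\prec\cdots\prec z_r=1$ in $[u,1]$. Then
\[
x_1\prec u\prec z_1\prec\cdots\prec z_r=1\quad\text{and}\quad y_1\prec u\prec z_1\prec\cdots\prec z_r=1
\]
are maximal chains from $x_1$, resp.\ $y_1$, to $1$, each of length $r+1$. Applying the inductive hypothesis at $x_1$ to compare $x_1\prec x_2\prec\cdots\prec x_m=1$ with the first of these two chains gives $m-1=r+1$; analogously at $y_1$ gives $n-1=r+1$, so $m=n=r+2$.

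The main obstacle is precisely the case $x_1\neq y_1$: one must manufacture a maximal chain through each of $x_1$ and $y_1$ that reaches $1$ and can be compared to the corresponding given chain by induction. The LU-covering property supplies the element $u$ that closes the bottom diamond on $\{x_1,y_1\}$, while the finite-length hypothesis ensures that $u$ itself extends upward to $1$ by a sequence of covers. Together they produce the common tail through $u$ which, via the inductive hypothesis at the lower-rank elements $x_1$ and $y_1$, forces the two original chains to have the same length.
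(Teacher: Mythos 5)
Your proof is correct and follows essentially the same route as the paper's: split on whether the two chains agree at the first step above $a$, use the LU-covering property to produce a common upper cover $u$ of $x_1$ and $y_1$ when they differ, extend $u$ to a maximal chain to $1$, and compare via the induction hypothesis. The only (harmless) difference is that you induct on the longest-chain length $d(a)$ from $a$ to $1$, whereas the paper inducts on the length of one of the given maximal chains.
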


\begin{proof}
We prove the following statement by induction on $n$: \\
If $a\in P$ and there exists some maximal chain from $a$ to $1$ of length $n$ then any maximal chain from $a$ to $1$ has length $n$. \\
For $n\leq1$ the statement is clear. Now assume $n>1$ and the statement to hold for all maximal chains from some fixed element of $P$ to $1$ of length $<n$. Let $a\in P$, $m\geq0$ and
\begin{align*}
a=a_0\prec\cdots\prec a_n & =1, \\
a=b_0\prec\cdots\prec b_m & =1
\end{align*}
be maximal chains from $a$ to $1$ of length $n$ and $m$, respectively. Since $n>1$ we have $m>1$. First assume $a_1=b_1$. Then
\begin{align*}
    a_1\prec\cdots\prec a_n & =1, \\
a_1=b_1\prec\cdots\prec b_m & =1
\end{align*}
are maximal chains from $a_1$ to $1$ of length $n-1$ and $m-1$, respectively. Because of the induction hypothesis we conclude $n-1=m-1$ and hence $n=m$. Now assume $a_1\neq b_1$. Then $a_1\parallel b_1$. Because of the LU-covering property there exists some $c_0\in P$ with $a_1\prec c_0$ and $b_1\prec c_0$. Since $\mathbf P$ is of finite length, there exists some maximal chain
\[
c_0\prec\cdots\prec c_k=1
\]
from $c_0$ to $1$ of length $k\geq0$. We then conclude that
\begin{align*}
         a_1\prec\cdots\prec a_n & =1, \\
a_1\prec c_0\prec\cdots\prec c_k & =1
\end{align*}
are maximal chains from $a_1$ to $1$ of length $n-1$ and $k+1$, respectively. According to the induction hypothesis, $n-1=k+1$. Analogously, one can show $m-1=k+1$. Together we obtain $n=k+2=m$.
\end{proof}

The following example shows that LU-covering condition does not imply that the involved $\lambda$-lattice is a lattice.

\begin{example}\label{ex5}
The poset from Example~\ref{ex1} satisfies the {\rm LU}-covering condition and all maximal chains from a fixed element of $L$ to $1$ have the same length.
\end{example}

\section{Acute $\lambda$-lattices}

The previous examples and reasoning lead us to introduce the following concept of an acute $\lambda$-lattice.

For every bounded poset $\mathbf P=(P,\leq,0,1)$ let $\mathbb L(\mathbf P)$ denote the $\lambda$-lattice $(P,\vee,\wedge)$ with $x\vee y=1$ and $x\wedge y=0$ for all $x,y\in P$ with $x\parallel y$. The $\lambda$-lattice arising in this way will be called the {\em acute $\lambda$-lattice} corresponding to $\mathbf P$.

In the following for every cardinal $k>1$ let ${\rm{\bf M}}_k$ denote the bounded lattice of length $2$ consisting of $0$ and $1$ and an antichain of cardinality $k$.

\begin{theorem}
Let $\mathbf P=(P,\leq,0,1)$ be a bounded poset and $A$ and $C$ denote the set of all atoms and coatoms of $(P,\leq)$, respectively. Then the following are equivalent:
\begin{enumerate}[{\rm(i)}]
\item The acute $\lambda$-lattice $\mathbb L(\mathbf P)$ corresponding to $\mathbf P$ satisfies the lower covering condition,
\item if $x\in A$, $y\in P$ and $x\parallel y$ then $y\in C$.
\item One of the following is true:
\begin{enumerate}[{\rm(a)}]
\item $A=\emptyset$,
\item $|A|=1$, and $x\leq y$ for all $x\in A$ and $y\in P\setminus\{0\}$,
\item there exists some cardinal $k>1$ with $\mathbf P\cong{\rm{\bf M}}_k$.
\end{enumerate}
\end{enumerate}
\end{theorem}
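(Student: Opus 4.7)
The key observation is that in the acute $\lambda$-lattice $\mathbb L(\mathbf P)$, two incomparable elements always have join $1$ and meet $0$. Applied to the lower covering condition $x\wedge y\prec x\Rightarrow y\prec x\vee y$, this means that when $x\parallel y$ the hypothesis reduces exactly to ``$x$ is an atom'' and the conclusion reduces exactly to ``$y$ is a coatom''. On the comparable side the condition is either impossible (if $x\leq y$ then $x\wedge y=x$, so $x\wedge y\prec x$ becomes $x\prec x$) or trivial (if $y<x$ then $y\prec x=x\vee y$ is literally the hypothesis). This immediately yields the equivalence (i) $\Leftrightarrow$ (ii).

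The implication (iii) $\Rightarrow$ (ii) is a routine case check. In (a) and (b) no atom of $\mathbf P$ admits an incomparable partner, so (ii) holds vacuously; in (c) every atom of $\mathbf M_k$ is also a coatom, and every element incomparable to an atom lies in the middle antichain and is therefore a coatom.

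For the main direction (ii) $\Rightarrow$ (iii) I would split on $|A|$. The case $A=\emptyset$ is (a). If $|A|=1$ with $A=\{x_0\}$, I claim $x_0\leq y$ for every $y\in P\setminus\{0\}$. Otherwise, since $y<x_0$ with $y>0$ is excluded by $x_0$ being an atom, one has $x_0\parallel y$, so (ii) forces $y\prec 1$. Since $y\neq x_0$, $y\notin A$, so $y$ is not an atom and some $z$ with $0<z<y$ exists; the same argument applied to $z$ (which is incomparable with $x_0$, because $x_0<z<y$ would give $x_0<y$) yields $z\prec 1$, contradicting $z<y<1$. This establishes (b).

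Finally, if $|A|\geq 2$, I aim to show $\mathbf P\cong\mathbf M_k$ with $k=|A|$. First, any two distinct atoms are incomparable, so (ii) implies every atom is a coatom. Next, for an arbitrary $y$ with $0<y<1$, choose an atom $x_0\neq y$; then $x_0<y$ contradicts $x_0\prec 1$ and $y<x_0$ contradicts $x_0\in A$, so $x_0\parallel y$ and (ii) gives $y\in C$. The same reasoning applied to any $z$ with $0<z<y$ would force $z\prec 1$, contradicting $z<y<1$, so no such $z$ exists and $y$ is itself an atom. Hence $P\setminus\{0,1\}=A$ is an antichain of atom-coatoms, i.e., $\mathbf P\cong\mathbf M_k$. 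The main subtlety is the $|A|=1$ case, where (ii) must be iterated twice to preclude any incomparable element without appealing to a descending chain condition on $P$.
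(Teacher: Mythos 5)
Your proof is correct and follows essentially the same route as the paper: the observation that for incomparable pairs the lower covering condition in $\mathbb L(\mathbf P)$ reads ``atom implies coatom'' gives (i) $\Leftrightarrow$ (ii), and (ii) $\Rightarrow$ (iii) is handled by the same case split on $|A|$ with the same double application of (ii) to rule out elements strictly between $0$ and a coatom. The only cosmetic difference is that in the case $|A|\geq 2$ the paper first establishes $A\subseteq C$, then $A=C$, then $P=A\cup\{0,1\}$, whereas you show directly that every $y$ with $0<y<1$ is a coatom and then an atom; both arguments are the same in substance.
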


\begin{proof}
$\text{}$ \\
(i) $\Leftrightarrow$ (ii) is clear. \\
(ii) $\Rightarrow$ (iii): \\
First assume $|A|=1$. Let $a$ denote the unique element of $A$. Now assume there exists some $b\in P\setminus\{0\}$ with $a\not\leq b$. Then $b\neq1$ and $b\not\leq a$. Hence $a\parallel b$ which implies $b\in C$. Since $b\notin A\cup\{0\}$ there exists some $c\in P$ with $0<c<b$. Now $a\leq c$ would imply $a<b$, a contradiction, and $c<a$ would imply $c=0$, again a contradiction. Hence $a\parallel c$ and therefore $c\in C$ contradicting $c<b<1$. This shows $a\leq x$ for all $x\in P\setminus\{0\}$. \\
Now assume $|A|>1$. If $a,b\in A$ and $a\neq b$ then $a\parallel b$ and hence $a\in C$. This shows $A\subseteq C$. Now assume $C\setminus A\neq\emptyset$. Then there exists some $a\in C\setminus A$. Of course, $a\neq0$. Hence there exists some $b\in P$ with $0<b<a$. If $b\in A$ then $b\in C$ contradicting $b<a<1$. Hence $b\notin A$. Let $c\in A$. Then $b\parallel c$ would imply $b\in C$ contradicting $b<a<1$. Moreover, $b\leq c$ would imply $b\in\{0,c\}$, a contradiction. Therefore $c<b$ which implies $c\in A\subseteq C$ and $c<b<a$, again a contradiction. This shows $A=C$. Now assume $d\in P\setminus(A\cup\{0\})$ and $e\in A$. If $d\parallel e$ then $d\in C=A$, a contradiction. If $d\leq e$ then $d\in\{0,e\}\subseteq A\cup\{0\}$, again a contradiction. Hence $e<d$ and $e\in C$ whence $d=1$. \\
(iii) $\Rightarrow$ (ii) is clear.
\end{proof}

\begin{corollary}\label{cor1}
If $P$ is finite then the acute $\lambda$-lattice $\mathbb L(\mathbf P)$ corresponding to $\mathbf P$ satisfies the lower covering condition if and only if $|P|=1$ or $|A|=1$ or $\mathbf P\cong{\rm{\bf M}}_n$ with some integer $n>1$.
\end{corollary}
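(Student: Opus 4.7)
The plan is to reduce the corollary directly to the equivalence (i)$\Leftrightarrow$(iii) of the preceding theorem, by showing that in the finite setting the three sub-cases (a), (b), (c) of (iii) collapse to the three disjuncts stated in the corollary; the converse direction will then be immediate.

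First I would handle sub-case (a). In any finite bounded poset with $|P|>1$, the set $P\setminus\{0\}$ is nonempty and finite, so it admits a minimal element, which is necessarily an atom. Hence $A=\emptyset$ forces $|P|=1$, and the reverse implication is trivial.

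Next, for sub-case (b), I would verify that the auxiliary clause ``$x\leq y$ for all $x\in A$ and $y\in P\setminus\{0\}$'' is automatic once $|A|=1$ and $P$ is finite: for any such $y$, finiteness supplies a minimal element $z_0$ of $\{z\in P:0<z\leq y\}$, which is an atom, and since there is only one atom $a$, we get $z_0=a$ and hence $a\leq y$. Thus in the finite case (b) reduces simply to $|A|=1$. For sub-case (c), the cardinal $k>1$ is a positive integer $n>1$ because $P$ is finite, so $\mathbf P\cong{\rm{\bf M}}_k$ is the same as $\mathbf P\cong{\rm{\bf M}}_n$.

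Putting these three observations together with the theorem yields the forward implication; the backward implication is immediate, since each disjunct of the corollary lies inside one of (a), (b), (c). The only substantive step is the atom-domination argument in sub-case (b), and as it is a one-line consequence of finiteness I do not foresee any genuine obstacle.
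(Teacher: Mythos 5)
Your proposal is correct and follows exactly the route the paper intends: the corollary is stated without proof as an immediate consequence of the preceding theorem, and your three finiteness observations (a nonempty finite poset above $0$ contains an atom, so (a) collapses to $|P|=1$ and the domination clause in (b) is automatic, while the cardinal in (c) becomes an integer) are precisely the routine details being left to the reader.
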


\begin{example}\label{ex8}
The acute $\lambda$-lattice $\mathbb L(\mathbf L)$ corresponding to the bounded poset $\mathbf L=(L,\leq,0,1)$ from Example~\ref{ex3} is semimodular and satisfies the weak lower covering condition, but it does not satisfy the lower covering condition according to Corollary~\ref{cor1}.
\end{example}

We can summarize some of our examples as follows. We use the following abbreviations:

SM $:=$ semimodularity, \\
WLCC $:=$ weak lower covering condition, \\
LCC $:=$ lower covering condition.
\[
\begin{tabular}{c|c|c|r}
SM  & WLCC & LCC &             Examples \\
\hline
yes & yes  & yes & \ref{ex3}, \ref{ex2} \\
yes & yes  & no  &            \ref{ex8} \\
yes &  no  & no  &            \ref{ex7} \\
no  & yes  & yes &            \ref{ex1} \\
no  & yes  & no  &            \ref{ex4} \\
no  &  no  & no  &            \ref{ex9}
\end{tabular}
\]

Authors' addresses:

Ivan Chajda \\
Palack\'y University Olomouc \\
Faculty of Science \\
Department of Algebra and Geometry \\
17.\ listopadu 12 \\
771 46 Olomouc \\
Czech Republic \\
ivan.chajda@upol.cz

Helmut L\"anger \\
TU Wien \\
Faculty of Mathematics and Geoinformation \\
Institute of Discrete Mathematics and Geometry \\
Wiedner Hauptstra\ss e 8-10 \\
1040 Vienna \\
Austria, and \\
Palack\'y University Olomouc \\
Faculty of Science \\
Department of Algebra and Geometry \\
17.\ listopadu 12 \\
771 46 Olomouc \\
Czech Republic \\
helmut.laenger@tuwien.ac.at
\end{document}